\newtheorem{proposition}{Proposition}[section]
\newtheorem{theorem}[proposition]{Theorem}
\newtheorem{corollary}[proposition]{Corollary}
\newtheorem{remark}[proposition]{Remark}
\newtheorem*{theorem*}{Theorem}
\newtheorem*{proposition*}{Proposition}
\newtheorem*{lemma*}{Lemma}
\newtheorem*{corollary*}{Corollary}
\newtheorem*{rep@theorem}{\rep@title}
\newcommand{\newreptheorem}[2]{
\newenvironment{rep#1}[1]{
 \def\rep@title{#2 \ref{##1}}
 \begin{rep@theorem}}
 {\end{rep@theorem}}}
\theoremstyle{definition}
\newcommand{\Q}{\mathbb{Q}}
\newcommand{\Z}{\mathbb{Z}}
\newcommand{\A}{\mathcal{A}}
\begin{document}
\title[Concordance to links with an unknotted component]{Concordance to links with an unknotted component}

\author{Christopher W.\ Davis}
\address{Department of Mathematics, University of Wisconsin--Eau Claire}
\email{daviscw@uwec.edu}
\urladdr{people.uwec.edu/daviscw}

\author{JungHwan Park}
\address{School of Mathematics, Georgia Institute of Technology}
\email{junghwan.park@math.gatech.edu }
\urladdr{people.math.gatech.edu/~jpark929/}

\date{\today}

\subjclass[2000]{57M25}

\begin{abstract} 
We construct links of arbitrarily many components each component of which is slice and yet are not concordant to any link with even one unknotted component. The only tool we use comes from the Alexander modules.  
\end{abstract}

\maketitle
\section{Introduction}
In \cite{Cochran1991, CO1990, CO1993, CR2012}, Cochran, Cochran-Orr, and Cha-Ruberman proved variations of the following theorem:

\begin{theorem*}
There are links with slice components that are not concordant to any link with every component unknotted.\end{theorem*}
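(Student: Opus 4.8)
The plan is to attach to a distinguished component of a link the class of its $0$-framed longitude in the rational Alexander module of the complementary sublink, together with the Blanchfield self-pairing of that class, to show this is a concordance invariant that vanishes whenever the component is unknotted, and to make it nonzero by a satellite construction all of whose components are slice. Concretely, let $L = K_0\cup K_1\cup\dots\cup K_{n-1}$ be a link all of whose pairwise linking numbers vanish --- a property shared by any link concordant to it, since linking numbers are concordance invariants. Put $L_0 = K_1\cup\dots\cup K_{n-1}$ and $X = S^3\setminus\nu(L_0)$, let $\phi\colon\pi_1(X)\to\Z$ send every meridian to $1$, and let $X_\phi\to X$ be the induced infinite cyclic cover, so that $\A(L_0):=H_1(X_\phi;\Q)$ is a $\Qt$-module with its rational Blanchfield pairing. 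As $\lk(K_0,K_j)=0$ for all $j$, the longitude $\lambda_0$ has $\phi(\lambda_0)=0$ and lifts to a loop $\widetilde\lambda_0\subset X_\phi$; the self-pairing $\langle[\widetilde\lambda_0],[\widetilde\lambda_0]\rangle\in\Q(t)/\Qt$ is unchanged by the ambiguity $[\widetilde\lambda_0]\mapsto\pm t^a[\widetilde\lambda_0]$ in the choice of lift, and this is the invariant. If $K_0$ is unknotted then $\lambda_0$ bounds a disk $D$ in $S^3\setminus\nu(K_0)$; since $D\cdot L_0=0$ one can tube $D$ along $L_0$ to a surface in $X$ with boundary $\lambda_0$, and following that surface into $X_\phi$ shows the self-pairing vanishes. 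Running this over each component shows any link with all components unknotted has these invariants zero.

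For the realization I would infect the $n$-component unlink along a suitable pattern of null-homotopic curves --- a curve $\eta$ together with a reversed parallel copy $\eta'$ --- using a knot $J$ (not assumed slice), arranged so that each component of the resulting link $L$ is a connected sum of the form $J\,\#\,(-J)$, hence a slice knot, while $\eta$ and $\eta'$ thread the remaining components enough that a Mayer--Vietoris computation produces a copy of $\Qt/(\Delta_J(t))$ inside $\A(L_0)$ on which $[\widetilde\lambda_0]$ is a generator; taking $J$ with, for instance, nonzero signature function makes that generator's self-pairing nonzero. Then the invariant of $L$ is nonzero, so by the previous paragraph $L$ is not concordant to any link with every component unknotted, while each $K_i$ is slice.

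The crux is the concordance invariance of $\langle[\widetilde\lambda_0],[\widetilde\lambda_0]\rangle$. If $C$ is a concordance from $L$ to $L'$, its restriction to $L_0$ is a concordance $C_0$ with exterior $E=(S^3\times[0,1])\setminus\nu(C_0)$, and its restriction to $K_0$ sweeps out an annulus in $E$ joining $\lambda_0$ to $\lambda'_0$; one wants the inclusions $X_\phi,X'_\phi\hookrightarrow E_\phi$ to carry $[\widetilde\lambda_0]$ and $[\widetilde{\lambda'_0}]$ to a common class with matching Blanchfield pairings. This rests on a duality / ``half lives, half dies'' analysis of the $\Qt$-homology of $E_\phi$ relative to its boundary, and the genuinely delicate point --- the main obstacle --- is to identify the precise quotient or localization of $\A(L_0)$ in which this is clean, so that the tubing ambiguities in the vanishing step and the noise a general concordance can introduce are both absorbed. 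The disk-tubing argument and the satellite computation are, by comparison, routine.
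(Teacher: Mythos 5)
Your approach is genuinely different from the paper's, and it has at least two gaps --- one you flag yourself, and one you do not.

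\textbf{Comparison.} The paper's tool (Corollary~\ref{cor:tool}) uses the Alexander module $\A(L_1)$ of the \emph{distinguished component} itself, viewed as a knot, and tracks the classes in it of the lifts of the \emph{other} components. You instead use the Alexander module of the complementary sublink $L_0=K_1\cup\dots\cup K_{n-1}$ and the class there of the lifted longitude of $K_0$, refined by its Blanchfield self-pairing. These are dual vantage points, but the paper's is elementary and sidesteps every delicacy you name: it needs only Kearton's result (Proposition~\ref{prop: Alexander module slice disk}), that for a slice knot $K$ with $\Delta_K\ne 1$ and slice disk $D$ the map $\A(K)\to\A(D)$ is nonzero, and then a short annihilator argument --- $\Delta_{L_1}$ and $\Delta_{L_1'}$ both kill the image of $[L_i]$ in $\A(D)$, so if they are coprime that image is zero, and if the $[L_i]$ generate $\A(L_1)$ this contradicts Kearton. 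There is no Blanchfield pairing on a link complement and no ``half lives, half dies'' duality to set up. Your route is much closer in spirit to Cochran's $\beta^i$-invariants in \cite{Cochran1991}, which the paper cites as a precursor and which it deliberately replaces with this simpler argument.

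\textbf{Gap 1: concordance invariance.} You correctly identify this as the crux, and you correctly observe that making $\langle[\widetilde\lambda_0],[\widetilde\lambda_0]\rangle$ a concordance invariant requires pinning down the right quotient or localization of $\A(L_0)$. But you do not carry it out, and it is not a detail: the class $[\widetilde\lambda_0]\in\A(L_0)$ is \emph{not} a concordance invariant of $L$ as it stands, and supplying the missing duality/localization analysis is essentially the content of Cochran's and Cochran--Orr's work. Leaving this as an acknowledged obstacle leaves the proof unfinished.

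\textbf{Gap 2: the vanishing step is wrong.} Tubing the Seifert disk $D$ for the unknotted $K_0$ along $L_0$ does give a surface $F\subset X$ with $\partial F=\lambda_0$, but each tube contributes a generator of $H_1(F)$ represented by a meridian of some $K_j$, and $\phi$ sends that class to $\pm1$. Hence $F$ does \emph{not} lift to $X_\phi$, and nothing about $[\widetilde\lambda_0]$ or its self-pairing can be read off from it. Worse, the claimed vanishing is simply false. Let $K_1$ be the trefoil and let $K_0$ be an unknotted curve on its genus-one Seifert surface whose lift generates $\A(K_1)\cong\Q[t^{\pm1}]/(t^2-t+1)$. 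Then $\lk(K_0,K_1)=0$ and $K_0$ is unknotted, yet $[\widetilde\lambda_0]$ generates a cyclic module carrying a nonsingular Blanchfield form, so $\langle[\widetilde\lambda_0],[\widetilde\lambda_0]\rangle\ne 0$. So your invariant does not detect unknottedness of a component at all. The paper's invariant does, for the structural reason that the unknot has trivial Alexander polynomial and hence trivial Alexander module --- which is exactly why it looks at $\A(K_0)$ rather than at the position of $K_0$ inside $\A(L_0)$.
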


In \cite[Theorem 2.11]{Cochran1991}, Cochran used the $\beta^i$-invariants to show that there exist links $L_1\cup L_2$ with $L_1$ slice and $L_2$ unknotted which are not topologically concordant to any link with the first component unknotted. A similar result appears in \cite{CO1990, CO1993} using the complexity of a covering link. Further, in \cite[Theorem 1.1]{CR2012}, Cha-Ruberman used covering link calculus together with the correction term of Heegaard Floer homology to give topologically slice links $L_1\cup L_2$ with $L_1$ smoothly slice, $L_2$ unknotted, and which are not smoothly concordant to any link with the first component unknotted. 

All the examples above are links with the second component unknotted which are not concordant to any link with the first component unknotted. In this short note, we use a classical invariant to provide examples of links whose every component is slice but which satisfy the stronger conclusion that they are not concordant to any link with \emph{even one} unknotted component.  

\begin{theorem}\label{thm: 2-component}
The $2$-component link of Figure~\ref{fig: 2-component} has slice knots for its components but is not concordant to any link with an unknotted component.\end{theorem}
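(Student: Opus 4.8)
The plan is to extract from the Alexander module of the link exterior a concordance invariant that is severely constrained whenever $L=L_1\cup L_2$ has an unknotted component, and then to compute it for the link of Figure~\ref{fig: 2-component} and find that it violates that constraint. Fix notation: for $i=1,2$ let $\epsilon_i\colon H_1(S^3\setminus L)\to\Z$ be the homomorphism sending the meridian $m_i$ of $L_i$ to $1$ and the meridian $m_{3-i}$ to $0$, let $X_L=S^3\setminus\nu(L)$ be the exterior, let $(X_L)_{\epsilon_i}$ denote the infinite cyclic cover determined by $\epsilon_i$, and set $\A_i(L)=H_1\bigl((X_L)_{\epsilon_i};\Q\bigr)$, a finitely generated module over $\Lambda=\Q[t^{\pm1}]$. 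The invariant I would use is the $\Lambda$-rank of $\A_i(L)$ — supplemented, should that not suffice, by the Witt class of the Blanchfield pairing on its torsion submodule. Two things are to be read straight off Figure~\ref{fig: 2-component}: that $\lk(L_1,L_2)=0$, and that each $L_i$ is a slice knot (either from an explicit ribbon disk, or by recognizing $L_i$ as a known slice knot).

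The first main step is a lemma: if $L$ is a $2$-component link with $\lk(L_1,L_2)=0$ and $L_i$ unknotted, then $\A_i(L)\cong\Lambda$ is free of rank one. The proof is a direct computation. Since $L_i$ is unknotted, $V:=S^3\setminus\nu(L_i)$ is a solid torus; because $\lk(L_1,L_2)=0$ the homomorphism $\epsilon_i$ factors through $\pi_1(V)\cong\Z$, so $(X_L)_{\epsilon_i}$ is the restriction to $X_L$ of the universal cover $\widetilde V\cong\mathbb{R}\times D^2\to V$. The other component lifts to a $\Z$-indexed, locally finite family of disjoint knots in $\widetilde V$; the first homology of the complement of such a family in $\mathbb{R}\times D^2$ is free abelian on one meridian per lift, and these meridians constitute a single free $\Lambda$-orbit, whence $\A_i(L)\cong\Lambda$. (If $\lk(L_1,L_2)$ is instead $\pm1$, the same reasoning with the total linking number cover shows the analogous module is one-dimensional over $\Q$, and one argues as below with that cover.)

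The second main step is concordance invariance: if $L$ and $L'$ are concordant $2$-component links, then for each $i$ the modules $\A_i(L)$ and $\A_i(L')$ have the same $\Lambda$-rank — and Witt-equivalent torsion Blanchfield pairings. This follows the classical template from knot theory. A concordance from $L$ to $L'$ gives a cobordism $W=(S^3\times[0,1])\setminus\nu(A_1\cup A_2)$ between $X_L$ and $X_{L'}$, where $A_i$ are the concordance annuli; the inclusions of the two ends induce isomorphisms on $H_1$, so $\epsilon_i$ extends over $W$ and restricts on the far end to the corresponding map for $L'$, and a Lefschetz-duality argument with $\Q(t)$ coefficients shows $\dim_{\Q(t)}H_1(X_L;\Q(t))=\dim_{\Q(t)}H_1(X_{L'};\Q(t))$, which is the claimed equality of $\Lambda$-ranks.

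Assembling these proves the theorem. If the link $L$ of Figure~\ref{fig: 2-component} were concordant to a link $L'$ with an unknotted component $L'_j$, then since concordance preserves linking numbers we would still have $\lk(L'_1,L'_2)=0$, so by the lemma $\A_j(L')\cong\Lambda$ has $\Lambda$-rank one, and hence by concordance invariance so does $\A_j(L)$. It therefore remains only to produce a presentation of $\A_1(L)$ and $\A_2(L)$ from the diagram and verify that neither has $\Lambda$-rank one — for instance that each is $\Lambda$-torsion, equivalently that the one-variable Alexander polynomials $\Delta^{\epsilon_i}_L(t)$ are nonzero; if the computation leaves rank one together with a nonzero torsion submodule, one checks instead that its Blanchfield pairing is not metabolic, which is again incompatible with freeness. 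I expect this final computation — and the packaging of the diagram into a usable presentation matrix — to be the main technical obstacle; the two structural steps above are routine once the setup is fixed.
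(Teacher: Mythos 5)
Your proposal takes a genuinely different route from the paper, and unfortunately the route has a structural gap that makes the chosen invariant blind to precisely the phenomenon you need to detect.

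You work with the module $\A_i(L)=H_1\bigl((X_L)_{\epsilon_i};\Q\bigr)$ built from the \emph{link} exterior $X_L$, and propose to use its $\Lambda$-rank, falling back on the Blanchfield pairing of its torsion submodule. The paper instead works with the Alexander module $\A(L_1)$ of the \emph{component} $L_1$ alone, and obstructs by tracking the class of the lift of $L_2$ inside $\A(L_1)$ across a concordance, using the fact (Kearton) that for a slice disk $D$ the map $\A(L_1)\to\A(D)$ cannot be zero when $\Delta_{L_1}\ne 1$.

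Here is the problem. For any $2$-component link with $\lk(L_1,L_2)=0$, writing $E(L_1)$ for the exterior of the knot $L_1$ and $\widetilde{E(L_1)}$ for its infinite cyclic cover, the space $(X_L)_{\epsilon_1}$ is $\widetilde{E(L_1)}$ with a $\Z$-family of lifts of $L_2$ removed, and the long exact sequence of the pair (using $H_2(\widetilde{E(L_1)};\Q)=0$) gives
\begin{equation*}
0\longrightarrow \Lambda \longrightarrow \A_1(L) \longrightarrow \A(L_1)\longrightarrow 0,
\end{equation*}
where the $\Lambda$ is generated by a meridian $\mu_0$ of one lift of $L_2$ and the quotient is the torsion module $\A(L_1)$. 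In particular $\A_1(L)$ has $\Lambda$-rank exactly $1$ for \emph{every} such link, not just those with $L_1$ unknotted; your lemma is the special case $\A(L_1)=0$. So the rank invariant carries no information at all.

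The fallback via the torsion Blanchfield form fails too, and for an instructive reason. Let $\ell_0$ denote the longitude of the lift of $L_2$; it maps to $[L_2]\in\A(L_1)$ under the quotient. Writing $p=\Delta_{L_1}(t)$, a $2$-chain bounding $p\cdot\ell_0$ in $\widetilde{E(L_1)}$ shows $p\,\ell_0 = q\,\mu_0$ in $\A_1(L)$, where $q/p = \mathrm{Bl}([L_2],[L_2])\in\Q(t)/\Lambda$ is the Blanchfield self-pairing of $[L_2]$ in $\A(L_1)$. When $[L_2]$ \emph{generates} $\A(L_1)$ — which is exactly the property the link of Figure~\ref{fig: 2-component} is engineered to have, and which is the crux of the paper's argument — the nonsingularity of the knot Blanchfield form forces $\gcd(p,q)=1$, and then $\A_1(L)\cong\Lambda^2/\langle(q,-p)\rangle\cong\Lambda$ is \emph{free} of rank one. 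So for the very link you are asked to obstruct, $\A_1(L)\cong\Lambda$ with trivial torsion, indistinguishable by your invariant from a link with $L_1$ unknotted. (Intuitively: passing to the link exterior discards the meridian of $L_2$'s filling relation, and filling back in is exactly what lets $[L_2]$ generate; your module sits upstream of the object that actually carries the obstruction.) Beyond this, the concordance-invariance of the Witt class you invoke is asserted but not established, but that is moot given the vanishing above. The fix is to shift attention from $\A_1(L)$ to $\A(L_1)$ and the class $[L_2]\in\A(L_1)$, and to exploit the concordance annulus capped with a slice disk, as in the paper's Proposition~\ref{prop: Alexander module slice disk} and Corollary~\ref{cor:tool}.
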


\begin{figure}[h]
\begin{picture}(150,95)
\put(0,5){\includegraphics[height=.15\textheight]{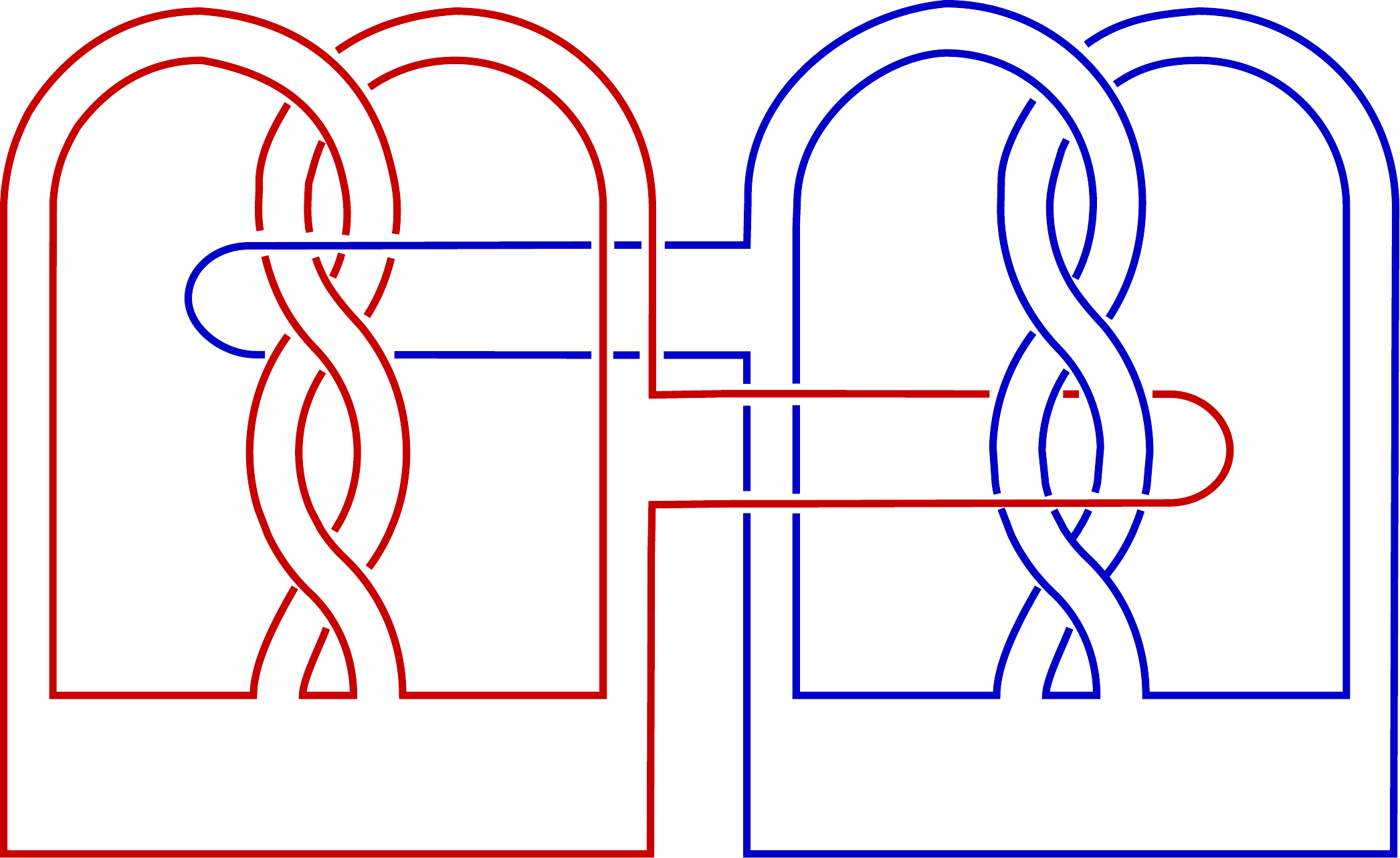}}
\end{picture}
\caption{A $2$-component link $L_1\cup L_2$ of Theorem~\ref{thm: 2-component}. }\label{fig: 2-component}
\end{figure} 

We generalize Theorem~\ref{thm: 2-component} to links of more than two components. We first state the case with $3$-components.

\begin{theorem}\label{thm: 3-component}
The $3$-component link of Figure~\ref{fig:L3} has slice links for its every proper sublink but is not concordant to any link with an unknotted component.\end{theorem}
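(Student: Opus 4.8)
I propose to prove this in three stages: verify the proper sublinks are slice, isolate a concordance obstruction living in the Alexander modules, and evaluate it on the diagram of Figure~\ref{fig:L3}.

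\textbf{The sublinks.} Each component $L_a$ is a knot that is trivialized by a single band move visible in the diagram, so it bounds a ribbon disk in $B^4$; likewise each two–component sublink $L_a\cup L_b$ becomes an unlink after an analogous band move and so bounds disjoint slice disks. Hence every proper sublink of $L=L_1\cup L_2\cup L_3$ is slice, exactly as in the verification for Theorem~\ref{thm: 2-component}.

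\textbf{The obstruction.} All pairwise linking numbers of the link of Figure~\ref{fig:L3} vanish, and linking numbers are concordance invariants, so any $L'$ concordant to $L$ again has all pairwise linking numbers zero. For a link $\Lambda=\Lambda_1\cup\cdots\cup\Lambda_m$ and an index $i$, let $A_i(\Lambda)=H_1(\widetilde X_i)$ be the first homology of the infinite cyclic cover of $S^3\setminus\Lambda$ determined by $\mu_{\Lambda_i}\mapsto 1$ and $\mu_{\Lambda_j}\mapsto 0$ for $j\neq i$, regarded as a module over $\mathbb Z[t^{\pm 1}]$, and let $\delta_i(\Lambda)\in\mathbb Z[t^{\pm 1}]$ be the order of its $\mathbb Z[t^{\pm 1}]$–torsion submodule — a version of the Alexander polynomial, and the only invariant I will use. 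Two facts drive the argument. First, \emph{if $\Lambda_i$ is unknotted and $\lk(\Lambda_i,\Lambda_j)=0$ for all $j$, then $A_i(\Lambda)$ is free of rank $m-1$, so $\delta_i(\Lambda)\doteq 1$}: for then $S^3\setminus\nu(\Lambda_i)$ is a solid torus whose infinite cyclic cover is the contractible $\mathbb R\times D^2$, the preimage of $\Lambda\setminus\Lambda_i$ is a disjoint union of circles — a full $\mathbb Z$–family of lifts of each component, since the linking numbers vanish — cyclically permuted by the deck transformation, and the first homology of any link complement is free abelian on meridians. Second, \emph{$\delta_i$ satisfies a Fox–Milnor condition under concordance of links with vanishing linking numbers}: if $L$ and $L'$ are concordant then $\delta_i(L)\,\delta_i(L')\doteq f(t)f(t^{-1})$ for some $f$, this being the Fox–Milnor argument applied to the $\mathbb Z[t^{\pm 1}]$–homology of the concordance exterior. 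Combining the two: if $L$ were concordant to a link $L'$ having an unknotted component $L'_i$, then $\delta_i(L)\doteq\delta_i(L)\,\delta_i(L')\doteq f(t)f(t^{-1})$ would be a norm.

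\textbf{The computation, and the main obstacle.} It remains to compute $\delta_i(L)$ for the link of Figure~\ref{fig:L3} and to show that for each $i\in\{1,2,3\}$ it is not a norm. Here $\widetilde X_i=\widetilde V_i\setminus\widetilde J$ with $V_i=S^3\setminus\nu(L_i)$ the exterior of the nontrivial (but slice) knot $L_i$; since $H_2(\widetilde V_i)=0$, a Mayer–Vietoris (or Fox-calculus) computation gives a short exact sequence $0\to\mathbb Z[t^{\pm 1}]^{\,2}\to A_i(L)\to\mathcal A(L_i)\to 0$, where $\mathcal A(L_i)$ is the knot Alexander module of $L_i$ and the $\mathbb Z[t^{\pm 1}]^{2}$ is generated by the lifted meridians of the other two components; the torsion of $A_i(L)$ is then a submodule of $\mathcal A(L_i)$ cut out by the extension class, which is controlled by the clasping in the diagram. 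The link is designed so that $L_i$ has cyclic Alexander module $\mathbb Z[t^{\pm 1}]/\big(g_i(t)g_i(t^{-1})\big)$ with $g_i$ irreducible and $g_i(t)\not\doteq g_i(t^{-1})$, and so that the clasping picks out exactly the submodule of order $g_i$; then $\delta_i(L)\doteq g_i$, which is not a norm because an irreducible polynomial is never of the form $f(t)f(t^{-1})$. This contradicts the previous paragraph, proving the theorem. The heart of the matter is checking that the clasping produces this "half" extension class for all three values of $i$ at once while keeping the proper sublinks slice; the $3$–component diagram is the $2$–component example of Theorem~\ref{thm: 2-component} with a third strand clasped in analogously, and the computation parallels that case.
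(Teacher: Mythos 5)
Your sliceness check for the proper sublinks is fine and matches the paper (band moves as in Figure~\ref{fig:3-comp sublink}). The obstruction step, however, has two genuine gaps. First, the Fox--Milnor condition you assert for $\delta_i$ is not an off-the-shelf fact: the Kawauchi-type Fox--Milnor theorems for links concern the total linking number cover ($\mu_j\mapsto 1$ for all $j$), whereas your cover sends $\mu_i\mapsto 1$ and $\mu_j\mapsto 0$. To get your statement you would have to rerun the duality argument in the infinite cyclic cover of the concordance exterior, control the homology of its boundary (which now contains the covers of two link exteriors and several $S^1\times S^1\times I$ pieces), and show the torsion is annihilated by a half-rank submodule; that is a lemma requiring proof, not a parenthetical. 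Second --- and you flag this yourself as ``the heart of the matter'' --- the computation $\delta_i(L)\doteq g_i$ is never carried out, and nothing visible in Figure~\ref{fig:L3} forces it. What the diagram is engineered to give (and what the paper verifies by an explicit homotopy, as in Figure~\ref{fig: 2-component homotope}) is that the lifts of $L_{k\pm1}$ \emph{generate} the cyclic module $\A(L_k)\cong \Q[t,t^{-1}]/\langle(1-2t)(2-t)\rangle$. That is a statement about the quotient $A_k(L)\onto\A(L_k)$; your $\delta_k(L)$ is the order of the \emph{torsion submodule} of $A_k(L)$, which sits inside the kernel-side data and is governed by the extension $0\to\langle\text{lifted meridians}\rangle\to A_k(L)\to\A(L_k)\to 0$. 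A priori that torsion could be anything from $0$ (if $A_k(L)$ is free of rank $2$, in which case your obstruction is vacuous) up to all of $\Q[t,t^{-1}]/(g\bar g)$, and the one hypothesis you can actually read off the picture does not determine it.

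The paper's argument sidesteps both issues and is worth adopting: it uses only that (i) the lifts of the other two components generate $\A(L_k)$, and (ii) for a slice disk $D$ of a knot $K$ with nontrivial Alexander polynomial, the kernel of $\A(K)\to\A(D)$ is a proper (Lagrangian) submodule (Proposition~\ref{prop: Alexander module slice disk}). If $L$ were concordant to $L'$ with $L_k'$ unknotted, capping the $k$th annulus with a slice disk for $L_k'$ yields a slice disk $D$ for $L_k$; the lifts of $L_{k\pm1}$ and of $L_{k\pm1}'$ agree in $\A(D)$ and are annihilated by both $\Delta_{L_k}(t)$ and $\Delta_{L_k'}(t)=1$, which are coprime, so they vanish in $\A(D)$ and hence $\A(L_k)\to\A(D)$ is the zero map --- contradicting (ii). This is Corollary~\ref{cor:tool}, and it converts the only thing you can verify diagrammatically into the full obstruction.
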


\begin{figure}[h]
\begin{picture}(230,95)
\put(0,5){\includegraphics[height=.15\textheight]{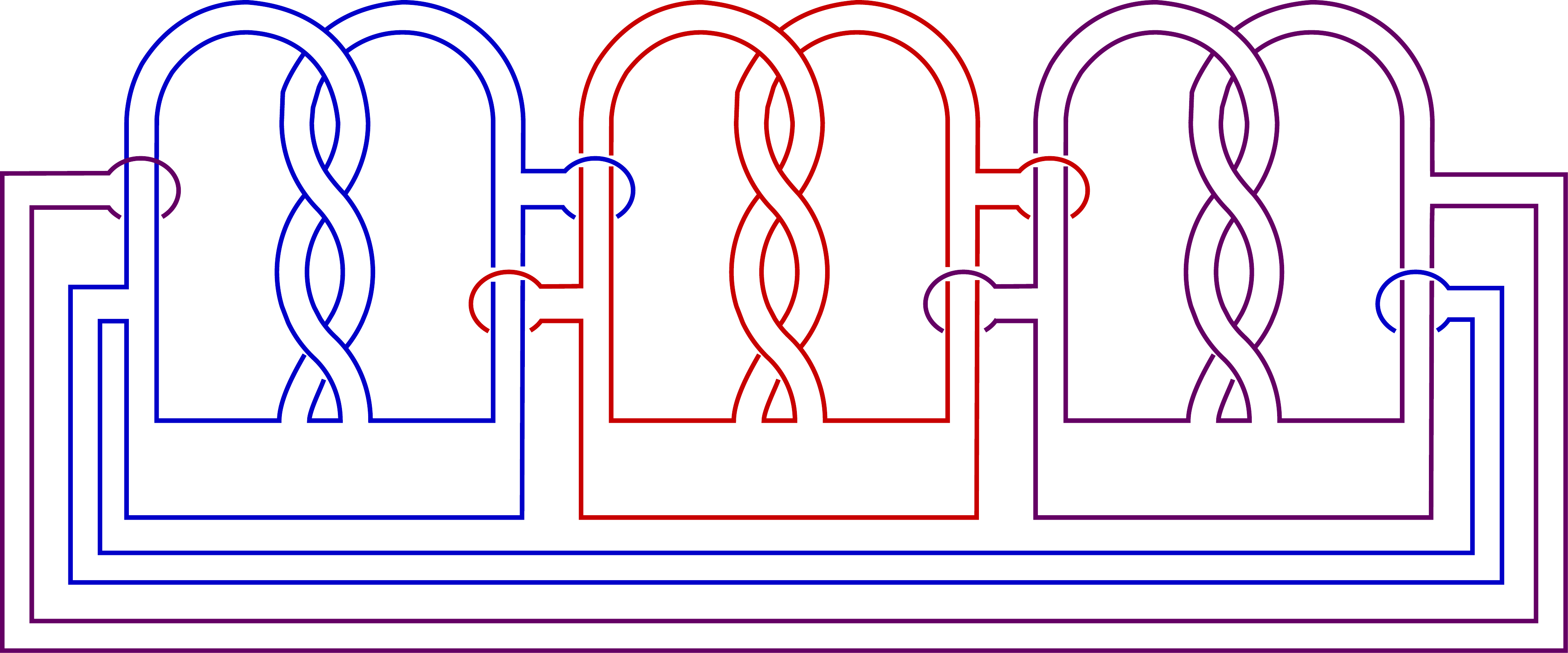}}
\end{picture}
\caption{A $3$-component link $L_1\cup L_2\cup L_3$ of Theorem~\ref{thm: 3-component}.}\label{fig:L3}
\end{figure} 

Theorem~\ref{thm: 3-component} is a special case of a the following more general result.

\begin{theorem}\label{thm: n-component}
For any $n\ge 3$, the $n$-component link of Figure~\ref{fig:Ln} has a slice link for its every $2$-component sublink but is not concordant to any link with an unknotted component. Moreover, its every proper sublink is concordant to a link with an unknotted component.\end{theorem}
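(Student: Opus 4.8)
The plan is to obstruct concordance to links with an unknotted component using, for each component, the Alexander module of the infinite cyclic cover of the complement dual to that component. Fix a link $\ell=\ell_1\cup\dots\cup\ell_n$ all of whose pairwise linking numbers vanish. For each $j$ let $\A_j(\ell)$ denote $H_1(\,\cdot\,;\Q)$ of the infinite cyclic cover of $S^3\setminus\ell$ classified by the map $\pi_1\to\Z$ sending the meridian of $\ell_j$ to $1$ and every other meridian to $0$; this is a finitely generated $\Qt$-module, its torsion submodule $T\A_j(\ell)$ carries a Blanchfield pairing, and its Alexander polynomial $\delta_j(\ell)$ (the order of $T\A_j(\ell)$, defined up to units of $\Qt$) is defined. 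The first step is the standard observation that $\delta_j$ is a concordance invariant up to the Fox--Milnor relation and that the Witt class of the Blanchfield pairing is a concordance invariant: a concordance from $\ell$ to $\ell'$ yields an exterior $E\subset S^3\times[0,1]$ with $H_1(E)\cong\Z^n$ generated by meridians, whose $\ell_j$-cover is, by the argument familiar from the knot case, a $\Qt$-homology cobordism relating the $\ell_j$-covers of $S^3\setminus\ell$ and $S^3\setminus\ell'$; Poincar\'e--Lefschetz duality then forces $T\A_j(\ell)\oplus(-T\A_j(\ell'))$ to be metabolic, and in particular $\delta_j(\ell)\,\delta_j(\ell')\doteq p(t)p(t^{-1})$ for some $p$. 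This Alexander module of a single cyclic cover is the only tool.

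The second step is the key lemma: if the component $\ell_j$ is unknotted, then $\A_j(\ell)$ is a free $\Qt$-module, hence $T\A_j(\ell)=0$, the Blanchfield pairing is zero, and $\delta_j(\ell)\doteq 1$. Indeed $S^3\setminus\nu(\ell_j)$ is then a solid torus $V$, and since $\lk(\ell_i,\ell_j)=0$ each remaining component has winding number $0$ in $V$; therefore in the infinite cyclic cover $\widetilde V\cong\R\times D^2$ the preimage of $\ell\setminus\ell_j$ is a countable disjoint union of circles, and the first homology of the complement of a disjoint union of circles in $\R^3$ is free on their meridians.

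The heart of the matter is the computation for $L$: its pairwise linking numbers all vanish, and for every $j$ the Blanchfield pairing on $T\A_j(L)$ is \emph{not} metabolic (equivalently, $\delta_j(L)$ is not of Fox--Milnor type). Here one reads off Figure~\ref{fig:Ln} that $L$ is assembled by iterated satellite operations on an unlink, so that after removing the solid torus $S^3\setminus\nu(L_j)$ the link $L$ is an infection of a simpler link along $L\setminus L_j$; a Mayer--Vietoris computation in the $L_j$-cover then expresses $\A_j(L)$ in terms of the infinite cyclic covers of the (slice) knots used in the construction, and the construction is arranged precisely so that the infecting curves lift to homologically essential curves in those covers, producing torsion whose Blanchfield pairing admits no metabolizer. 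If the figure is symmetric under permuting the components, one such computation suffices; otherwise it is carried out for each $j$. Granting this, suppose $L$ were concordant to a link $L'$ with $L'_j$ unknotted for some $j$. Linking numbers are concordance invariants, so $\lk(L'_i,L'_j)=0$ for all $i$; the key lemma gives $\delta_j(L')\doteq 1$, and the first step gives $\delta_j(L)\doteq\delta_j(L)\,\delta_j(L')\doteq p(t)p(t^{-1})$, contradicting that $\delta_j(L)$ is not of Fox--Milnor type. Hence $L$ is not concordant to any link with an unknotted component.

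The remaining assertions are read off the figure. Each $2$-component sublink $L_i\cup L_k$ is visibly a split union of slice knots, hence a slice link; this gives the first clause, and also shows that every proper sublink of at most two components is concordant to the unlink, which has unknotted components. For a proper sublink on at least three components, inspection of Figure~\ref{fig:Ln} shows that deleting a component of $L$ unclasps one of the remaining components, so the sublink is isotopic --- or, after an evident band move, concordant --- to a link with an unknotted component. I expect the computation for $L$ to be the main obstacle: correctly identifying the satellite structure in Figure~\ref{fig:Ln}, carrying out the Mayer--Vietoris/satellite computation of each $\A_j(L)$, and certifying the absence of a metabolizer for the resulting linking pairing --- for instance by reducing modulo a well-chosen prime of $\Qt$ to a nondegenerate linking form over a field, for which existence of a metabolizer is a finite check --- and doing so uniformly in $j$, since the possession of an unknotted component must be excluded for every index.
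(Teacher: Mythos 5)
Your proposal substitutes a different invariant for the paper's: the Witt class of the Blanchfield pairing on the torsion of the single-variable Alexander module $\A_j(L)$ of the \emph{link} exterior (the cover dual to $L_j$), rather than the Alexander module of the component knot $L_j$ together with the classes of the lifts of the remaining components. The decisive step in your scheme --- showing that for the link of Figure~\ref{fig:Ln} the pairing on $T\A_j(L)$ is not metabolic --- is never carried out; you flag it yourself as ``the main obstacle.'' Since that computation is the entire content of the hard direction of the theorem, this is an outline, not a proof. Moreover, there is a structural reason to doubt the chosen invariant can succeed: by the very homology-cobordism argument you invoke for concordance invariance, the Witt class of this pairing is also an obstruction to $L$ being a slice link. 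The examples here are assembled from slice pieces, every component and every $2$-component sublink is slice, and nothing in your argument excludes that the full link is slice --- in which case your invariant vanishes identically while the theorem's conclusion still holds. The paper's obstruction (Corollary~\ref{cor:tool}) is of a different and finer nature: the lifts of $L_{k\pm1}$ \emph{generate} the cyclic module $\A(L_k)\cong\Qt/\langle(1-2t)(2-t)\rangle$ of $L_k\cong 9_{46}$, and are annihilated by $\Delta_{L_k'}(t)$ for any replacement component $L_k'$; if $L_k'$ were unknotted, the induced map $\A(L_k)\to\A(D)$ to the module of the resulting slice disk would be zero, contradicting Proposition~\ref{prop: Alexander module slice disk}. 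This argument is deliberately insensitive to whether the link or its sublinks are slice, which is exactly what the theorem requires.

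Two further points. Your reading of the figure is wrong in one place: the $2$-component sublinks $L_k\cup L_{k+1}$ on adjacent components are \emph{not} split unions of slice knots; they are nonsplit and are seen to be slice only via the pair of band moves of Figure~\ref{fig:3-comp sublink}, and nonadjacent pairs give the split link $9_{46}\sqcup 9_{46}$. Your key lemma (an unknotted component with vanishing linking numbers forces $T\A_j=0$, since the cover of the complementary solid torus is $\R\times D^2$ and the other components lift to nullhomologous circles) is correct, and your treatment of proper sublinks via a band move after deleting a neighboring component matches the paper's; but neither of these repairs the missing central computation.
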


\begin{figure}[h]
\begin{picture}(240,95)
\put(-1,17){\large{$\cdots$}}
\put(-1,99){\large{$\cdots$}}
\put(-4,0){$L_{k-1}$}
\put(75,0){$L_{k}$}
\put(153,0){\small{$L_{k+1}$}}
\put(229,0){\small{$L_{k+2}$}}
\put(232,17){\large{$\cdots$}}
\put(232,99){\large{$\cdots$}}
\put(15,12){\includegraphics[height=.15\textheight]{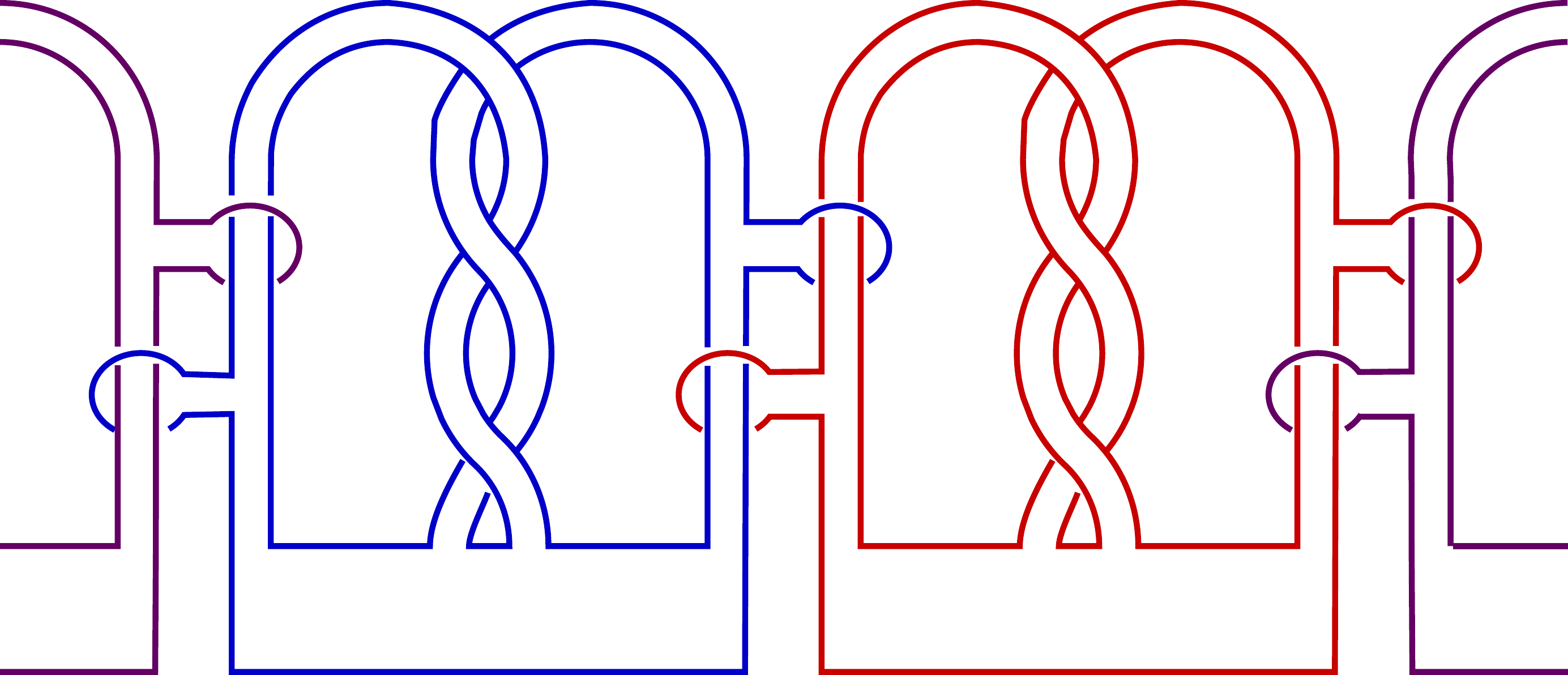}}
\end{picture}

\caption{An $n$-component link $L_1\cup \cdots\cup L_n$ with components indexed by $\Z/n\Z$ of Theorem~\ref{thm: n-component}. }\label{fig:Ln}
\end{figure} 

In fact, the preceding links are not concordant to any link that has a component with trivial Alexander polynomial. We extend this further by replacing trivial Alexander polynomial with any given finite collection of Alexander polynomials. This should be thought of it as a generalization of \cite[Theorem 1.3]{CR2012}.

\begin{theorem}\label{thm:alexanderpoly} For any finite collection $D$ of Alexander polynomials of knots and for any knot $J$ with $\Delta_{J}(t) \in D$, there are links $L = L_1 \cup L_2$ satisfying the following:
\begin{enumerate}
\item $L_1$ and $L_2$ are concordant to $J$.
\item $L$ is not concordant to any link $L' = L_1' \cup L_2'$ with either $\Delta_{L_1'}(t) \in D$ or $\Delta_{L_2'}(t) \in D$.
\end{enumerate}\end{theorem}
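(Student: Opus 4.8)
The plan is to reprove \cite[Theorem 1.3]{CR2012} inside the framework already set up for Theorems~\ref{thm: 2-component}--\ref{thm: n-component}, upgrading the obstruction ``has an unknotted component'' (that is, a component with trivial Alexander polynomial) to ``has a component whose Alexander polynomial lies in $D$''. Since $D$ is finite, $P_D(t):=\prod_{\delta\in D}\delta(t)$ is a well-defined symmetric polynomial with $P_D(1)=\pm1$; this finiteness is essential, because it lets us fix once and for all an auxiliary knot $K$ whose Alexander polynomial $\Delta_K(t)$ is coprime to $P_D(t)$. For instance one can take a twist knot $K_n$ with $\Delta_{K_n}(t)=nt^2-(2n+1)t+n$, choosing $n$ outside the finitely many values for which $\Delta_{K_n}$ shares a factor with $P_D$, and arranging in addition that $\Delta_{K_n}$ is irreducible and is not of the form $f(t)f(t^{-1})$.

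First I would build $L=L_1\cup L_2$ from a two-component model link (the two-component analogue of the ``necklace'' of Figure~\ref{fig:Ln}) by the same genetic-infection recipe used for the earlier theorems: infect the two components along zero-linking Bing-type curves by $J$, so that $L_1$ and $L_2$ become concordant to $J$ and $\lk(L_1,L_2)=0$ (this is condition (1)), and infect along the clasp curve(s) by $K$. Using Litherland's satellite formula for Alexander modules, the point is that the relevant Alexander module of $L$ then carries a $\Q[t^{\pm1}]/\Delta_K(t)$ summand equipped with a nondegenerate — indeed, by the choice of $\Delta_K$, anisotropic — Blanchfield pairing, so that this summand cannot be absorbed into a Fox--Milnor factor. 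I would make the whole configuration symmetric under interchanging the two components, so that this $\Delta_K$-summand is detected from the viewpoint of either component.

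The obstruction then proceeds as for Theorems~\ref{thm: 2-component}--\ref{thm: n-component}. If $L$ were concordant to $L'=L_1'\cup L_2'$, the link exteriors are $\Z$-homology cobordant rel their boundary tori, so the appropriate $\Q[t^{\pm1}]$-cover of the concordance exterior is a $\Q[t^{\pm1}]$-homology cobordism; a standard Poincar\'e--Lefschetz duality argument then forces the torsion Alexander module of $L$, with its Blanchfield pairing, to be Witt equivalent to that of $L'$, and in particular their orders to agree up to a Fox--Milnor factor $h(t)h(t^{-1})$. The \emph{key lemma} is a Torres/Crowell-sequence computation: for a two-component link with vanishing linking number, the torsion of the Alexander module associated with one component divides the Alexander polynomial of that component. (Proof sketch: a Mayer--Vietoris comparison of the infinite cyclic cover of the knot exterior of $L_i'$ with the corresponding cover of the link exterior; since $H_2$ of a knot's infinite cyclic cover vanishes, the ``extra'' piece coming from the removed solid torus is free over $\Q[t^{\pm1}]$, so the torsion embeds in the knot's Alexander module.) Granting this: if $\Delta_{L_1'}\in D$, then the pertinent part of the Alexander module of $L'$ has order dividing $\Delta_{L_1'}(t)\mid P_D(t)$, hence coprime to $\Delta_K(t)$; combined with the Witt/Fox--Milnor comparison and the fact that $\Delta_K$ is irreducible, symmetric, and not a norm, this yields $\Delta_K(t)\mid(\text{a polynomial coprime to }\Delta_K(t))$, a contradiction. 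The case $\Delta_{L_2'}\in D$ is identical, using the symmetry of the construction.

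The step I expect to be the real obstacle is the construction, where one must arrange \emph{simultaneously} that: (a) both components stay concordant to $J$, which forces every infection curve to have zero linking number with both components, so that the first Alexander module of a meridional cover does \emph{not} see $K$ and one must instead phrase the obstruction at the level of a covering link of $L$ — equivalently a second-order (metabelian) Alexander module — and run the key lemma there rather than for the ordinary Alexander module; (b) the $\Delta_K$-factor survives concordance, i.e.\ cannot be cancelled by a Fox--Milnor factor, which is precisely what the symmetry, the non-norm condition on $\Delta_K$, and anisotropy of the relevant Blanchfield piece are for; and (c) this factor is invisible to any component with Alexander polynomial in $D$, which is the content of the key lemma together with $\gcd(\Delta_K,P_D)=1$. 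Keeping track of the several divisibilities and indeterminacies — the Fox--Milnor factor and the cyclic error terms in the Torres sequence — is the technical heart of the argument.
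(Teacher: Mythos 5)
Your proposal diverges from the paper's argument in a way that leaves a genuine gap, one you partly flag yourself. The obstruction used throughout this paper is not a Witt-class or Fox--Milnor comparison of Blanchfield pairings; it is Corollary~\ref{cor:tool}: if $L_1$ is slice with nontrivial Alexander polynomial and the lifts of the other components \emph{generate} $\A(L_1)$, then $L$ cannot be concordant to any $L'$ with $\Delta_{L_1'}(t)$ coprime to $\Delta_{L_1}(t)$, because both polynomials annihilate the common classes of the lifts in the Alexander module of the capped-off slice disk, forcing those classes to vanish and contradicting Proposition~\ref{prop: Alexander module slice disk}. Your plan instead routes the obstruction through an auxiliary infection knot $K$ placed on zero-linking curves, and you correctly observe at the end that the ordinary Alexander module of a component then does not see $K$ at all; your proposed repair --- passing to covering links or metabelian second-order Alexander modules and proving Witt-equivalence and Torres-type divisibility statements there --- is not carried out and is a substantial undertaking (Casson--Gordon/Cochran--Orr--Teichner territory), not a routine adaptation. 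As written, the argument for condition (2) does not close. A secondary mismatch: your key lemma and Witt comparison concern the order and pairing of the \emph{link's} Alexander module, whereas the hypothesis to be contradicted concerns the Alexander polynomial of a single \emph{component} of $L'$; bridging these is exactly what the unproved lemma must do, and the zero-linking constraint defeats the naive version.

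The paper's route is far more elementary and needs no auxiliary $K$. It takes the explicit family $L(m,J)$ of Figure~\ref{fig:alexander}, where the $m$-twist boxes make $\Delta_{L_1(m,U)}(t)=\Delta_{L_2(m,U)}(t)$ coprime to every polynomial in the finite set $D$ for $m$ large, and each component of $L(m,J)$ is $J$ connected-summed with a slice knot, giving condition (1). The step you are missing is the reduction to $J=U$: given a concordance from $L(m,J)$ to $L'$ with $\Delta_{L_1'}(t)\in D$, tie $-J$ locally into the concordance and stack a concordance from $L(m,J\#(-J))$ to $L(m,U)$; this exhibits $L(m,U)$ as concordant to a link $L''$ with $\Delta_{L_1''}(t)=\Delta_{L_1'}(t)\Delta_J(t)$, a product of polynomials in $D$ and hence coprime to $\Delta_{L_1(m,U)}(t)$. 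Since the lift of $L_2(m,U)$ generates $\A(L_1(m,U))$ and $L_1(m,U)$ is slice, Corollary~\ref{cor:tool} gives the contradiction, and symmetry handles the other component. If you wish to salvage your approach you would need to actually prove the metabelian analogues of the statements you invoke; as it stands, the first-order version of your obstruction vanishes by your own observation.
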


Remarkably, our obstruction comes from a classical invariant, the Alexander module, of a component of a link and the classes of the lifts of the remaining components. We recall the Alexander module and state the obstruction in Section~\ref{sect:tool}. In Section~\ref{sect:proof}, we use the obstruction to prove Theorems~\ref{thm: 2-component}, \ref{thm: 3-component}, \ref{thm: n-component}, and \ref{thm:alexanderpoly}. 

While the question of concordance to boundary links (as in \cite{CO1990,CO1993,Livingston1990}) is not our main focus we will take a moment and point out that the techniques of our paper produce links which are not concordant to boundary links (see Remark \ref{rk:boundary}). It is an interesting question to ask if our obstruction is related to Milnor's invariants.

This project is also motivated by the following question: does there exist a link in a homology sphere which is not concordant to any link in $S^3$, even when each component is concordant to a knot in $S^3$. Note that by performing $\frac{1}{p}$-surgery on a component of a link of Figure $3$, we get a new link where each component is concordant to a knot in $S^3$. We believe that this link is not concordant to any link in $S^3$, but we are not able to prove this at the moment. We also make a remark that the above question is a natural generalization of a theorem of Adam Levine \cite{Levine:2016-1} $($see also \cite{Hom-Levine-Lidman:2018-1}$)$, where he proved that there exists a knot in a homology sphere which is not smoothly concordant to any knot in $S^3$. As far as the authors knowledge, it is not known if such a statement is true for the topological category. 

\subsection*{Acknowledgments}
This project started when the first author was visiting the Georgia Institute of Technology. He thanks them for their support. We would also like to thank Lisa Piccirillo, Kouki Sato, Jennifer Hom, Kent Orr, Jae Choon Cha, Min Hoon Kim, and Mark Powell for helpful conversations.

\section{Obstruction: The Alexander module}\label{sect:tool}

For the rest of this paper, we work in the topological (locally flat) category. For any knot $K$, we denote by $E(K)$ the knot exterior $S^3 \setminus \nu(K)$, where $\nu(K)$ is an open tubular neighborhood of $K$. The first homology of the infinite cyclic cover of $E(K)$ with rational coefficients is a $\Q[t,t^{-1}]$-module, where the action of $t$ is induced by the deck transformation.  This module is the \emph{Alexander module} of  $K$ and is denoted by $\A(K)$. Similarly, if $D\subseteq B^4$ is a slice disk for $K$, then we denote by $E(D)$ the disk exterior $B^4 \setminus \nu(D)$, where $\nu(D)$ is an open tubular neighborhood of $D$. Again, the first homology of the infinite cyclic cover of $E(D)$ with rational coefficients is called the Alexander module of $D$ and denoted by $\A(D)$.

The Alexander module can be used to frame many obstructions to the sliceness of a knot. It is a well known fact that the Alexander module of a knot has a non-singular form called the Blanchfield form \cite{Blanchfield:1957-1} and if $K$ bounds a slice disk $D$, then the kernel of the map from $\A(K)$ to $\A(D)$ is a Lagrandian submodule \cite{Kearton:1975-2} with respect to the Blanchfield form. In particular, if $\A(K)$ is not the trivial module then this kernel cannot be all of $\A(K)$. Also, recall that a knot has trivial Alexander module if and only if it has trivial Alexander polynomial. Combining these facts we get the following well known result.
 
\begin{proposition}\label{prop: Alexander module slice disk}
If $K$ is a knot with nontrivial Alexander polynomial and $D$ is a slice disk for $K$, then $\A(K)\to \A(D)$ is not the zero homomorphism.\end{proposition}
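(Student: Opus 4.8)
The plan is to assemble the statement from the two standard facts that the text has already recalled: the Blanchfield form on $\A(K)$ is non-singular, and the kernel of $\A(K)\to\A(D)$ is a submodule on which the Blanchfield form vanishes (a Lagrangian, in the sense of \cite{Kearton:1975-2}). Since $K$ has nontrivial Alexander polynomial, $\A(K)$ is a nonzero torsion $\Qt$-module, and the non-singularity of the Blanchfield form $\Bl\colon \A(K)\times\A(K)\to \Q(t)/\Qt$ means precisely that the adjoint map $\A(K)\to \overline{\operatorname{Hom}_{\Qt}(\A(K),\Q(t)/\Qt)}$ is an isomorphism. In particular, no nonzero element of $\A(K)$ pairs trivially with all of $\A(K)$; equivalently, the annihilator of $\A(K)$ under $\Bl$ is zero.

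First I would record that $\Delta_K(t)\ne 1$ forces $\A(K)\ne 0$, using the cited equivalence (trivial Alexander module $\iff$ trivial Alexander polynomial). Next I would invoke Kearton's theorem: for a slice disk $D$, the submodule $P=\ker\bigl(\A(K)\to\A(D)\bigr)$ satisfies $P=P^{\perp}$ with respect to $\Bl$, where $P^{\perp}=\{x\in\A(K): \Bl(x,p)=0 \text{ for all }p\in P\}$. If $\A(K)\to\A(D)$ were the zero homomorphism, then $P=\A(K)$, hence $P^{\perp}=\A(K)^{\perp}=0$ by non-singularity of $\Bl$; combining $P=P^{\perp}$ gives $\A(K)=0$, contradicting $\A(K)\ne 0$. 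Therefore $\A(K)\to\A(D)$ is not the zero map.

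There is really no serious obstacle here, since everything is quoted from the literature; the only point that needs a clean statement is the precise meaning of ``non-singular'' and ``Lagrangian.'' If one prefers to avoid the full strength of $P=P^{\perp}$, it suffices to use only the inclusion $P\subseteq P^{\perp}$ (isotropy of the kernel), which already gives $\A(K)\subseteq \A(K)^{\perp}=0$ under the same hypothesis, so the self-annihilation half of Kearton's result is not needed. I would therefore phrase the proof so that it uses just: (i) $\A(K)$ is nonzero, (ii) $\Bl$ has trivial annihilator, and (iii) $\ker(\A(K)\to\A(D))$ is isotropic for $\Bl$. These three bullet points immediately yield the conclusion, and the proof is a couple of lines.
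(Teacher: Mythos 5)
Your proposal is correct and follows essentially the same route as the paper: the paper also derives the proposition directly from the non-singularity of the Blanchfield form, the fact that $\ker(\A(K)\to\A(D))$ is Lagrangian (isotropy alone suffices, as you note), and the equivalence of trivial Alexander module with trivial Alexander polynomial. Your write-up is, if anything, slightly more explicit about the adjoint-isomorphism formulation of non-singularity than the paper's brief ``combining these facts'' argument.
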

 
 The following is the immediate corollary of Proposition~\ref{prop: Alexander module slice disk}
 
\begin{corollary}\label{cor:tool}
Let $L = L_1\cup \cdots \cup L_n$ be a link with vanishing pairwise linking numbers. Suppose $L_1$ is a slice knot with nontrivial Alexander polynomial and the classes of the lifts of $L_2, \ldots, L_n$ generate $\A(L_1)$. Then $L$ is not concordant to any link $L'=L_1'\cup\cdots\cup L_n'$ where $\Delta_{L_1}(t)$ and $\Delta_{L_1'}(t)$ are relatively prime. In particular, $L$ is not concordant to any link of the form $U\cup L_2' \cup \cdots\cup L_n'$ where $U$ is the unknot.\end{corollary}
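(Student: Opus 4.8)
The plan is to argue by contradiction, the only new ingredient being the exterior of a concordance annulus together with the functoriality of Alexander modules. Suppose $L$ were concordant to some $L' = L_1' \cup\cdots\cup L_n'$ with $\Delta_{L_1}(t)$ and $\Delta_{L_1'}(t)$ relatively prime in $\Qt$, realized by disjoint annuli $A_1,\dots,A_n\subseteq S^3\times[0,1]$, where $A_i$ cobounds $L_i$ and $L_i'$. Because linking numbers are concordance invariants, $L'$ also has vanishing pairwise linking numbers; in particular, for $i\ge 2$ the core of $A_i$ has linking number $\lk(L_i,L_1)=0$ with $A_1$. Set $E(A_1):=(S^3\times[0,1])\setminus\nu(A_1)$, the exterior of a concordance, so that its first integral homology is infinite cyclic, generated by a meridian of $A_1$ compatibly with the meridians of $L_1$ and of $L_1'$. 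Let $M$ denote the first rational homology of its infinite cyclic cover, a $\Qt$-module, and let $g\colon\A(L_1)\to M$ and $g'\colon\A(L_1')\to M$ be the $\Qt$-module maps induced by the inclusions of the two ends $E(L_1)\hookrightarrow E(A_1)\hookleftarrow E(L_1')$.

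The key step is to show that $g=0$. Since the core of $A_i$ has linking number $0$ with $A_1$, each $A_i$ (for $i\ge 2$) lifts to the infinite cyclic cover of $E(A_1)$; such a lift is a properly embedded annulus cobounding a lift of $L_i$ and a lift of $L_i'$, and comparing with the lifts defining the classes $[\widetilde{L_i}]\in\A(L_1)$ and $[\widetilde{L_i'}]\in\A(L_1')$ gives a relation $g([\widetilde{L_i}])=u_i\,g'([\widetilde{L_i'}])$ for some unit $u_i\in\Qt$. Now $\Delta_{L_1'}(t)$ annihilates $\A(L_1')$ --- the Alexander polynomial always annihilates the Alexander module --- and hence annihilates each $g([\widetilde{L_i}])$; since the $[\widetilde{L_i}]$ generate $\A(L_1)$ by hypothesis, $\Delta_{L_1'}(t)$ annihilates the whole image of $g$. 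On the other hand, the image of $g$ is a quotient of $\A(L_1)$, so $\Delta_{L_1}(t)$ annihilates it as well. Since $\Qt$ is a principal ideal domain and $\Delta_{L_1}(t),\Delta_{L_1'}(t)$ are coprime, a B\'ezout identity $a\Delta_{L_1}+b\Delta_{L_1'}=1$ forces the image of $g$ to vanish, i.e.\ $g=0$.

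To conclude, I would use that $L_1'$ is slice --- being concordant to the slice knot $L_1$ --- and fix a slice disk $D''$ for $L_1'$. Gluing $S^3\times[0,1]$ onto $B^4$ along the $3$-sphere containing $L_1'$, the disk $\widehat D:=D''\cup_{L_1'}A_1$ becomes a slice disk for $L_1$, with exterior $E(\widehat D)=E(D'')\cup_{E(L_1')}E(A_1)$. Because $E(L_1)\subseteq E(A_1)\subseteq E(\widehat D)$, the induced map $\A(L_1)\to\A(\widehat D)$ factors through $g$ and is therefore the zero map. But $\Delta_{L_1}(t)$ is nontrivial, so Proposition~\ref{prop: Alexander module slice disk}, applied to $L_1$ and $\widehat D$, says this map is \emph{not} zero --- a contradiction. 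The last assertion of the corollary is the special case $L_1'=U$, since the unknot has $\Delta_U(t)=1$, which is coprime to every Alexander polynomial.

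The points that will need care, none of which I expect to be hard, are: the compatibility of the infinite cyclic covers of $E(L_1)$, $E(A_1)$, $E(L_1')$, $E(D'')$, and $E(\widehat D)$ under the relevant inclusions (all of which reduce to meridians being sent to meridians); the fact that each $A_i$ lifts to the cover of $E(A_1)$, which is exactly where the vanishing linking number hypothesis is used; and checking that $\widehat D$ is a genuine locally flat slice disk so that Proposition~\ref{prop: Alexander module slice disk} applies. The one genuinely delicate point should be the identification $g([\widetilde{L_i}])\doteq g'([\widetilde{L_i'}])$ in $M$: one must keep track of which lifts of $L_i$ and $L_i'$ occur on the boundary of a chosen lift of $A_i$, but the resulting ambiguity is only by a unit of $\Qt$ and so is harmless for the annihilator computation. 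Everything else is bookkeeping with Mayer--Vietoris and the standard structure theory of Alexander modules.
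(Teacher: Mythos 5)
Your proof is correct and runs on the same engine as the paper's: both invoke that the Alexander polynomial annihilates the Alexander module, identify the lift classes of $L_i$ and $L_i'$ (up to a unit) after inclusion into a common module, and use coprimality plus a B\'ezout relation to force $\A(L_1)\to\A(D)$ to vanish, contradicting Proposition~\ref{prop: Alexander module slice disk}. The only difference is organizational: the paper forms the slice disk $D = C_1\cup(\text{disk for }L_1')$ immediately and carries out the annihilator argument inside $\A(D)$, whereas you first show the map $g$ from $\A(L_1)$ into the Alexander module of the annulus exterior $E(A_1)$ is zero and then observe that $\A(L_1)\to\A(\widehat D)$ factors through $g$.
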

 
\begin{proof} Suppose $L$ and $L'$ are concordant via $C = C_1\cup\cdots\cup C_n$.  Since $L_1$ is slice, $L_1'$ is slice as well.  Cap $C_1$ with a disk in the $4$-ball bounded by $L_1'$ to get a slice disk $D$ for $L_1$. Since $\Delta_{L_1}(t)$ is the annihilator of $\A(L_1)$, $$\Delta_{L_1}(t)\cdot [L_i]=0 \in \A(L_1) \text{ for every } i\in \{2,\ldots, n\}.$$  Here, $[L_i]$ indicates the class of the lift of $L_i$ to $\A(L_1)$. Similarly, $$\Delta_{L_1'}(t)\cdot [L_i']=0 \in \A(L_1') \text{ for every } i\in \{2,\ldots, n\}.$$  Since the lift of $L_i$ and the lift of $L'_i$ represent the same class in $\mathcal{A}(D)$, we have both of $\Delta_{L_1}(t)$ and $\Delta_{L_1'}(t)$ annihilating the classes of the lifts of $L_i$ in $\A(D)$.  Further, since $\Delta_{L_1}(t)$ and $\Delta_{L_1'}(t)$ are relatively prime, the classes of the lifts of $L_2,\ldots, L_n$ are trivial in $\A(D)$. This is not possible by Proposition~\ref{prop: Alexander module slice disk}.\end{proof}

\begin{remark}\label{rk:boundary}\normalfont Corollary~\ref{cor:tool} also gives an obstruction for links to be concordant to boundary links.  Indeed, if $L' = L'_1\cup \cdots \cup L'_n$ is a boundary link then by lifting Seifert surfaces for $L'_2,\ldots, L'_n$ to the infinite cyclic cover of $E(L'_1)$ we see that the classes of the lifts of $L'_2, \ldots, L'_n$ are trivial in $\A(L'_1)$.  It would be interesting to see if there exist links with vanishing Milnor's invariants which satisfy the hypotheses of Corollary~\ref{cor:tool}.\end{remark}

\section{proofs of Theorems~\ref{thm: 2-component}, \ref{thm: 3-component}, \ref{thm: n-component}, and \ref{thm:alexanderpoly}}\label{sect:proof}

We are now ready to prove that our examples satisfy the asserted conditions.

\begin{proof}[Proof of Theorem~\ref{thm: 2-component}] 

Let $L=L_1\cup L_2$ be the link in Figure~\ref{fig: 2-component}. Each of $L_1$ and $L_2$ is isotopic to the $9_{46}$ knot which is slice. The $9_{46}$ knot has a cyclic Alexander module $$\A(L_1) \cong \frac{\Q[t,t^{-1}]}{\langle(1-2t)\cdot(2-t)\rangle} $$ with a generator given by the lift of the curve depicted to the far right of Figure~\ref{fig: 2-component homotope}. Also, Figure~\ref{fig: 2-component homotope} describes a homotopy in the exterior of $L_1$ from $L_2$ to the curve whose lift generates $\A(L_1)$. 

\begin{figure}[h]
\begin{picture}(285,60)
\put(0,5){\includegraphics[height=.1\textheight]{L2.pdf}}
\put(43,-5) {$(a)$}
\put(120,5){\includegraphics[height=.1\textheight]{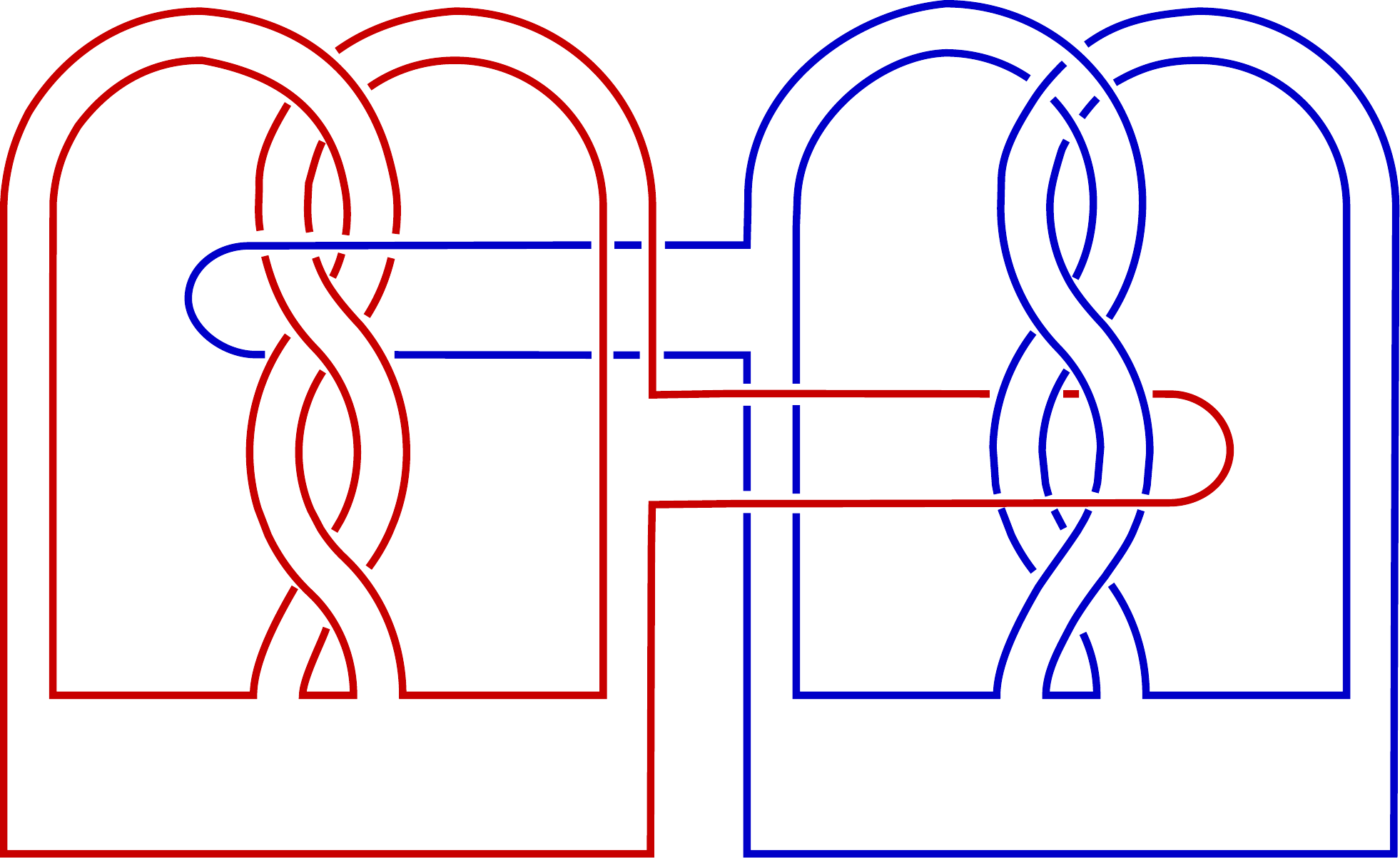}}
\put(163,-5) {$(b)$}
\put(240,5){\includegraphics[height=.1\textheight]{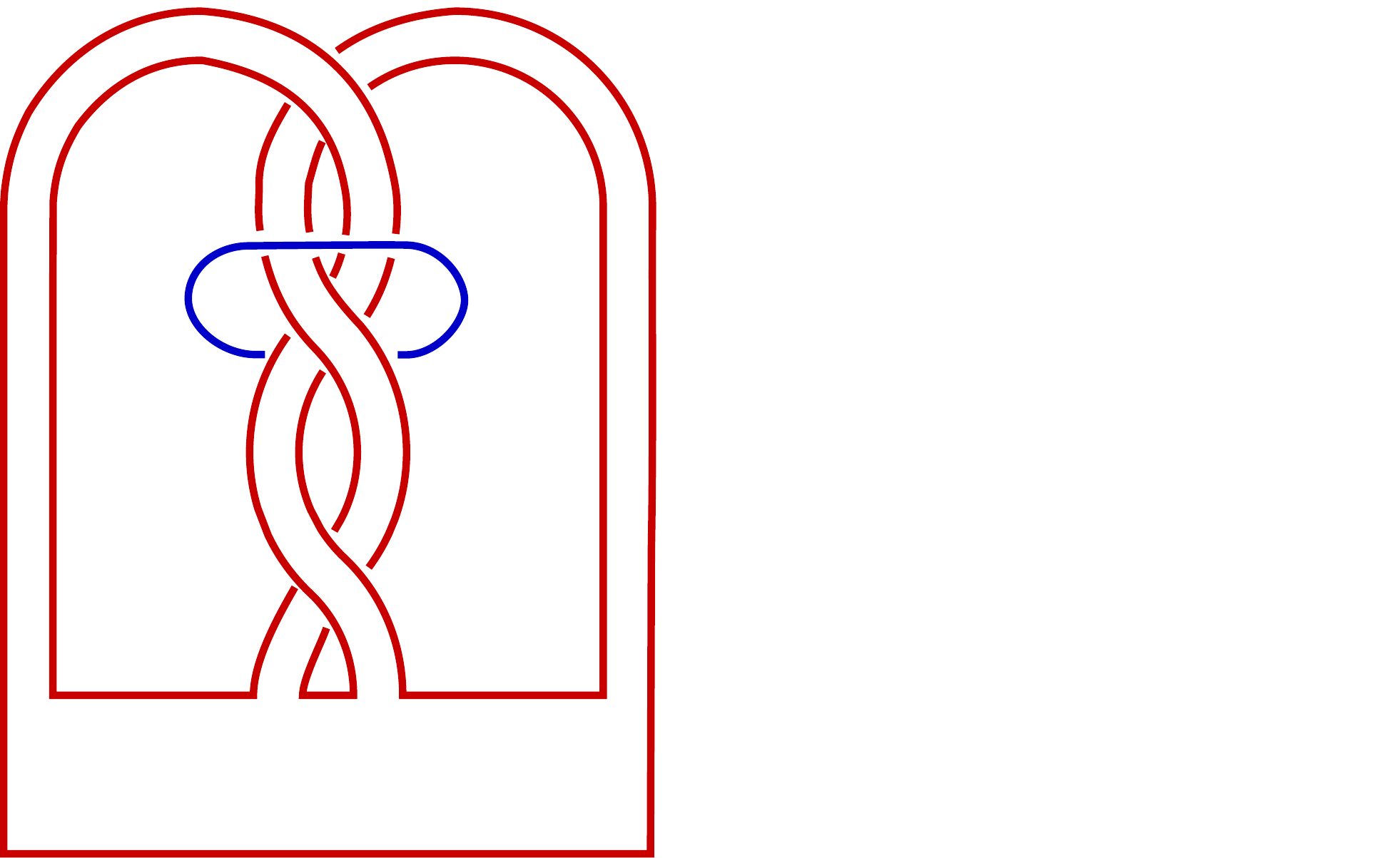}}
\put(257,-5) {$(c)$}
\end{picture}
\caption{Left to right:  (a) The Link $L_1\cup L_2$. (b) A homotopy changing 5 crossings of $L_2$ in the complement of $L_1$.  (c) An isotopy reduces the image of $L_2$ in (b) to a curve whose lift generates $\A(L_1)$.} \label{fig: 2-component homotope}
\end{figure} 

The class of the lift of $L_2$ generates $\A(L_1)$.  Thus, Corollary~\ref{cor:tool} concludes that $L$ is not concordant to any link $L_1'\cup L_2'$ with $L_1'$ unknotted. The proof is complete by the symmetry of $L$.\end{proof}

Since Theorem~\ref{thm: 3-component} is a special case of Theorem~\ref{thm: n-component}, we only prove Theorem~\ref{thm: n-component}.

\begin{proof}[Proof of Theorem~\ref{thm: n-component}.]
Let $L=L_1\cup L_2\cup\cdots\cup L_n$ be the link in Figure~\ref{fig:Ln}. As every component of $L$ is the $9_{46}$ knot, each component is slice. Further, every $2$-component sublink of $L$ is either isotopic to a link drawn in Figure~\ref{fig:3-comp sublink}~(a) or the split link $9_{46} \sqcup 9_{46}$. Observe that both links are slice, as shown in Figure~\ref{fig:3-comp sublink}. Let $L'$ be a proper sublink of $L$, then for some $k\in \Z/n$, $L_{k}$ is a component of $L'$ and $L_{k+1}$ is not. We may now modify $L'$ by changing $L_{k}$ by a similar band move to that depicted in Figure~\ref{fig:3-comp sublink}.  This reveals that $L'$ is concordant to a link with an unknotted component.

\begin{figure}[h]
\begin{picture}(290,65)
\put(0,0){\includegraphics[height=.1\textheight]{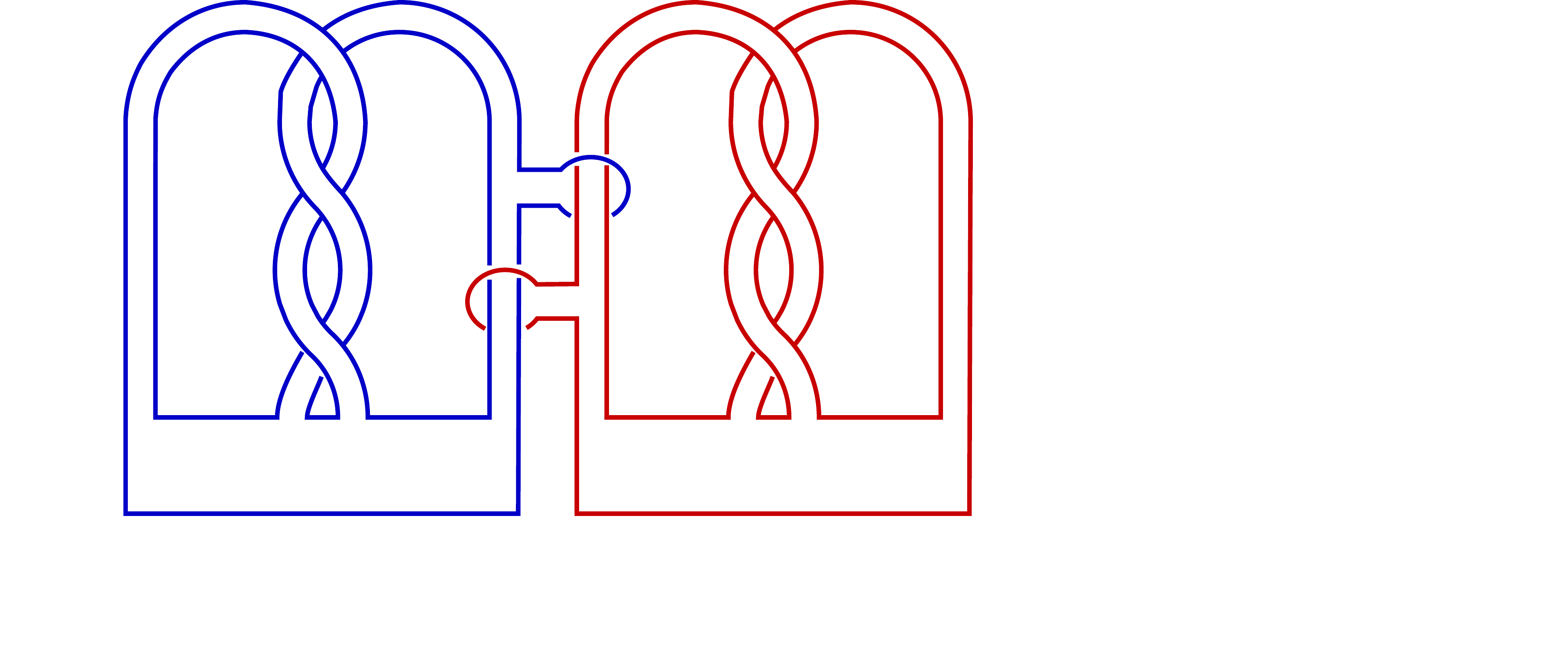}}
\put(45,0) {$(a)$}
\put(100,0){\includegraphics[height=.1\textheight]{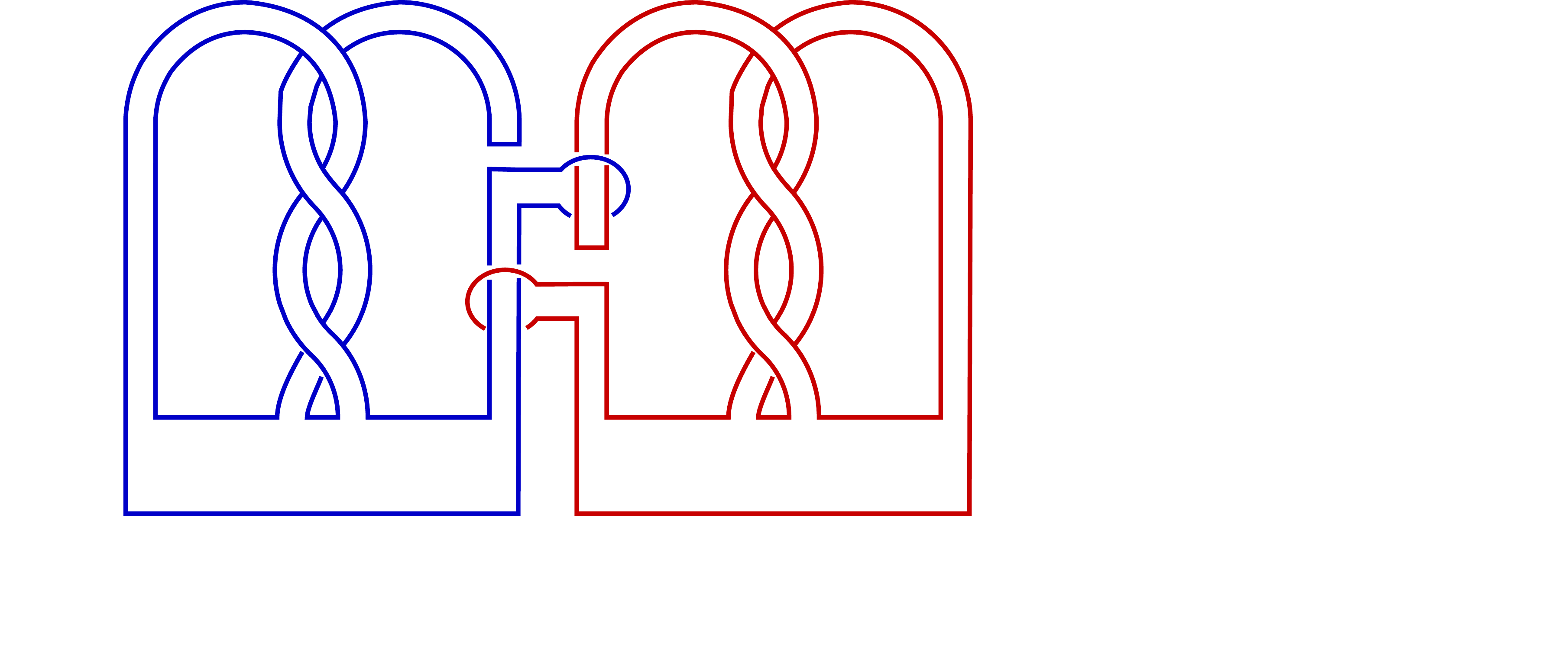}}
\put(145,0) {$(b)$}
\put(200,0){\includegraphics[height=.1\textheight]{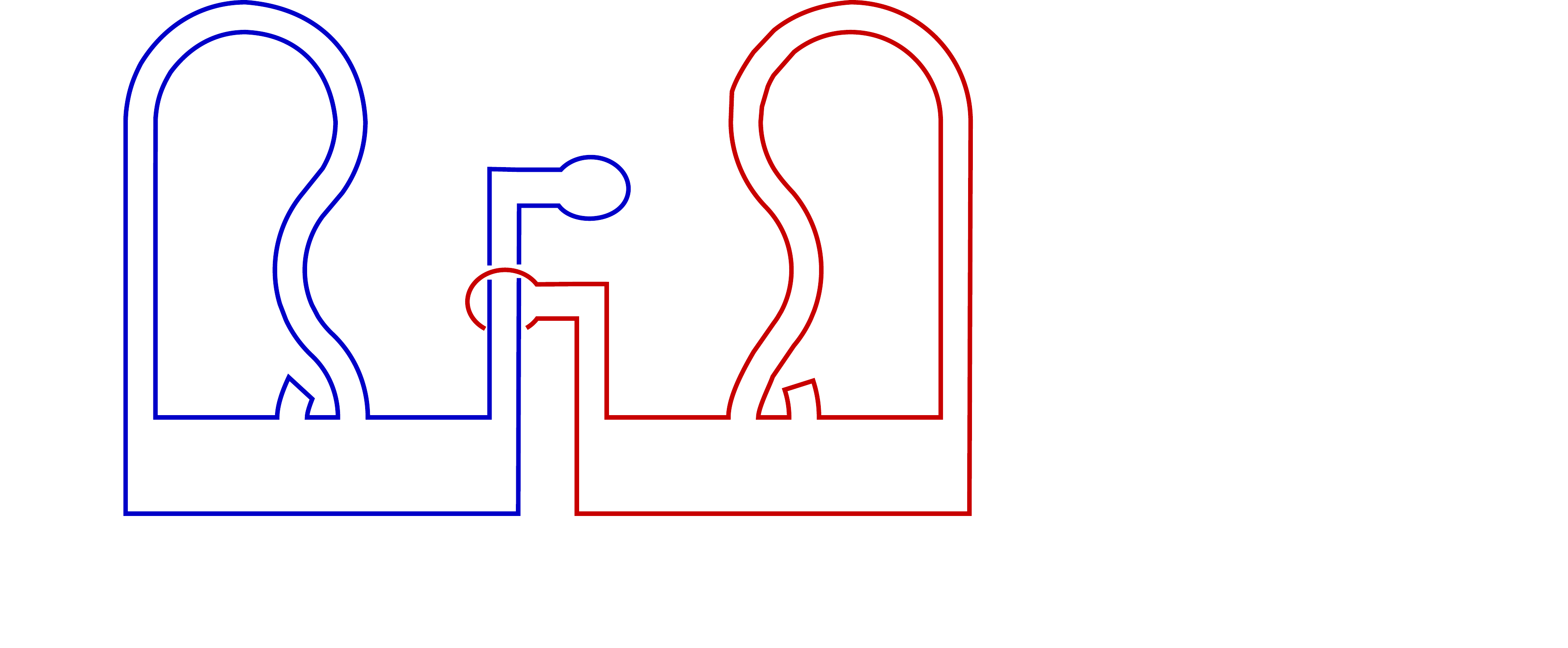}}
\put(245,0) {$(c)$}
\end{picture}
\caption{A $2$-component sublink $L_k\cup L_{k+1}$ of $L$ and a pair of band moves showing it is slice.} \label{fig:3-comp sublink}
\end{figure} 

Let $k \in \Z/n\Z$ and consider a 3-component sublink $L_{k-1}\cup L_k\cup L_{k+1}$. As in the proof of Theorem~\ref{thm: 2-component}, it is straightforward to verify that the classes of lifts of $L_{k-1}$ and $L_{k+1}$ generate $\A(L_k)$. By Corollary~\ref{cor:tool}, we conclude that $L$ is not concordant any link with the $k$th component unknotted. Again, the proof is complete by the symmetry of $L$.\end{proof}

Lastly, we prove Theorem~\ref{thm:alexanderpoly}.

\begin{figure}[h]
\begin{picture}(150,95)
\put(0,5){\includegraphics[height=.15\textheight]{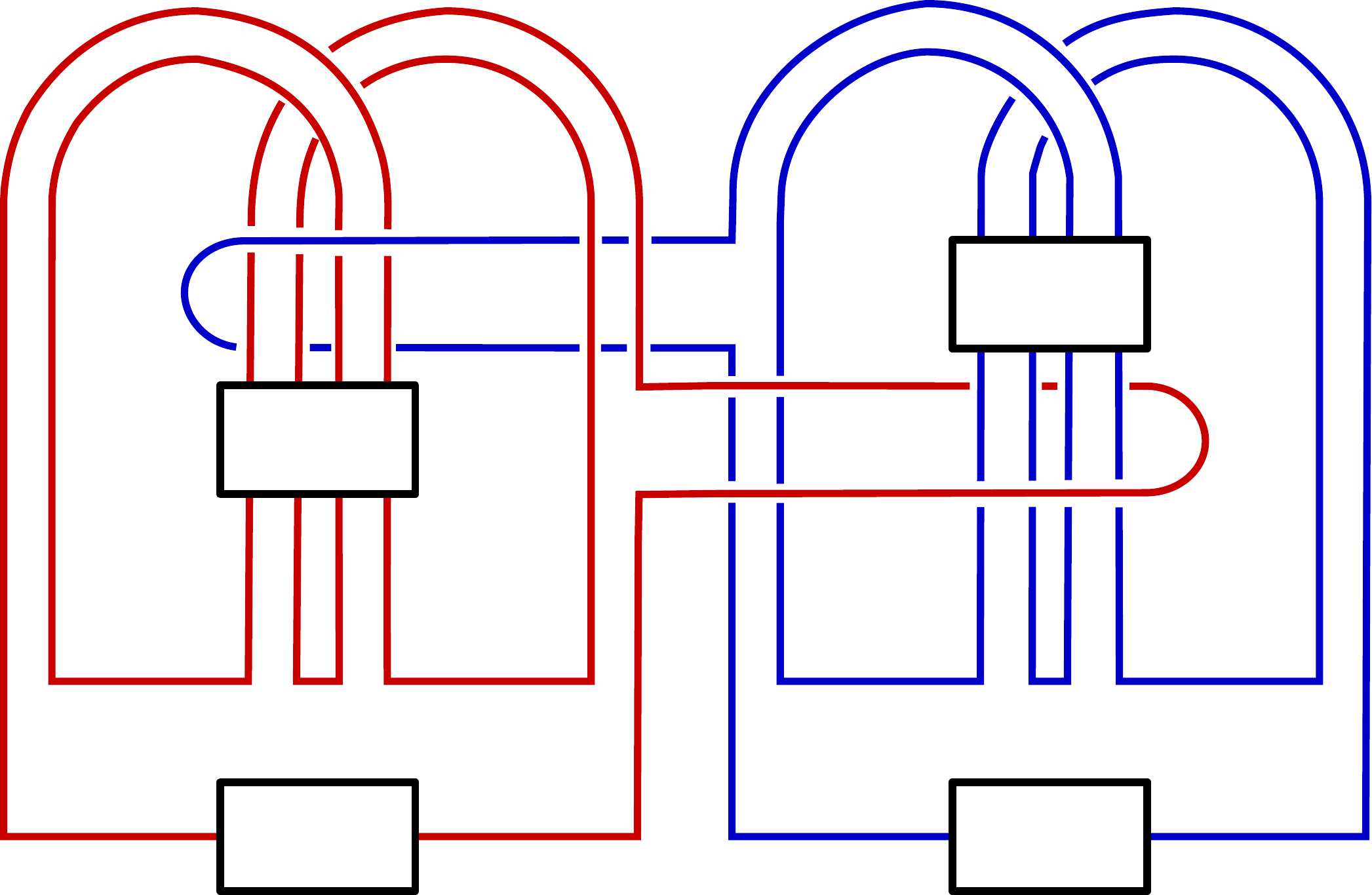}}
\put(28.5,50) {$m$}
\put(29.5,7.5) {$J$}
\put(104.5,65) {$m$}
\put(105,7.5) {$J$}
\end{picture}
\caption{A $2$-component link $L(m,J)=L_1(m,J)\cup L_2(m,J)$ of Theorem~\ref{thm:alexanderpoly}.  Each box containing an integer $m$ indicates the bands passing through the
box have $m$ full twists rather than all the strands. Each box containing a knot $J$ indicates the strand passing through the box is tied into $J$.}\label{fig:alexander}
\end{figure} 

\begin{proof}[Proof of Theorem~\ref{thm:alexanderpoly}.] Let $L=L(m,J)=L_1(m,J)\cup L_2(m,J)$ be the link of Figure~\ref{fig:alexander}. We choose $m$ large enough so that $\Delta_{L_1(m,U)}(t)=\Delta_{L_2(m,U)}(t)$ is relatively prime to every polynomial in the finite set $D$. Since $L_1(m,J)$ and $L_2(m,J)$ are isotopic to a knot obtained as the connected sum of  $J$ with a slice knot, the first condition of the theorem is satisfied.

Suppose $L$ is concordant to a link $L'=L_1' \cup L_2'$ where $\Delta_{L_1'}(t) \in D$ and let $-J$ be the knot obtained by taking the mirror image $J$ and reversing the orientation. By locally tying $-J$ into the concordance from $L$ to $L'$ and stacking a concordance from $L(m,J \#-J)$ to $L(m,U)$, we see that $L(m,U)$ is concordant to a link $L''=L_1'' \cup L_2''$ where $L_1''$ is isotopic to a connected sum of $L_1'$ with $-J$. In particular, $\Delta_{L_1''}(t) = \Delta_{L_1'}(t)\Delta_{J}(t)$. By the assumption, $\Delta_{L_1(m,U)}(t)$ and $\Delta_{L_1''}(t)$ are relatively prime. As in the proof of Theorem~\ref{thm: 2-component}, it is straightforward to verify that each component of $L(m,U)$ is slice and the class of the lift of $L_2(m,U)$ generate $\A(L_1(m,U))$. We get a contradiction by Corollary~\ref{cor:tool}. The proof is complete by applying the same argument for the second component.\end{proof}

\bibliographystyle{alpha}
\bibliography{biblio}
\end{document}